\newtheorem{theorem}{Theorem}[section]
\newtheorem{proposition}[theorem]{Proposition}
\newcommand{\ZZ}{\mathbb{Z}}
\newcommand{\CC}{\mathbb{C}}
\newcommand{\RR}{\mathbb{R}}
\def\vol{\mbox{\rm{Vol}}}
\def\area{\mbox{\rm{Area}}}
\def\d{\partial}
\def\D{\Delta}
\def\g{\gamma}
\def\G{\Gamma}
\def\S{\Sigma}
\def\split{\backslash\backslash}
\def\tr{\mbox{\rm{tr}}}
\def\PSL{\mbox{\rm{PSL}}}
\def\SL{\mbox{\rm{SL}}}
\def\Int{\mbox{int}}
\def\HH{\mathbb{H}}
\def\CC{\mathbb{C}}
\edef\t@mp{\catcode`\noexpand\#=\the\catcode`\#}%
    \def\h@sh{#}%
\edef\t@mp{\catcode`\noexpand\~=\the\catcode`\~}%
    \def\tild@{~}%
\begin{document}
\Large
\title{Pants immersed in hypebolic 3-manifolds}
\author{Ian Agol}
\address{
University of California, Berkeley \\
    970 Evans Hall \#3840 \\
    Berkeley, CA 94720-3840}

\email{ianagol@math.berkeley.edu}
\thanks{partially supported by NSF grant DMS-0504975 and the Guggenheim foundation}

\begin{abstract}
We show that an immersed thrice-punctured sphere in a cusped orientable hyperbolic
3-manifold is either embedded or has a
single clasp in a manifold obtained by hyperbolic Dehn filling on a cusp of the Whitehead link complement.
\end{abstract}

\maketitle

\section{Introduction}
Let $\S_{(0,3)}$ be a thrice-punctured sphere. 
Adams showed that if $M$ is an oriented 3-manifold with boundary
with hyperbolic interior such that $\S_{(0,3)}$ embeds incompressibly into $M$,
then $\Int M$ has an embedded totally geodesic thrice-punctured sphere \cite{Adams85}. 
Adams also gave many examples of hyperbolic manifolds containing
embedded incompressible thrice-punctured spheres, and indeed one may
easily produce infinite families of examples. 
We study immersed $\pi_{1}$-injective maps $f:\S_{(0,3)}\to M$, where $\Int M$
is hyperbolic and $f$ is not homotopic to  an embedding.  If one takes one component
of the Whitehead link, then it bounds an immersed 2-punctured disk in the complement of the other component, which 
has a single clasp (double-point arc) singularity as in Figure \ref{clasp}. One may perform surgery on the other component to obtain infinitely many
hyperbolic 3-manifolds with an immersed pants. We prove that
this is the only possible way that a pants may be non-trivially
immersed in a hyperbolic 3-manifold.

{\bf Notation:} For a path metric space $X$ and a closed
subspace $Y \subset X$, let $X\split Y$ denote the
path metric completion of the open subspace $X - Y$. 

{\bf Acknowledgement:} We thank Shawn Rafalski for 
suggesting some simplifications to the argument. 

\section{Parabolic $\PSL_2(\CC)$ representations of $\pi_1(\S_{(0,3)})$}

We will use $S=\S_{(0,3)}$ from now on to indicate a pair
of pants, that is a surface of genus zero with three boundary components. 
Let $\d S = c_1\cup c_2 \cup c_3$, and let $a_{ij}\subset S$
be an embedded arc connecting $c_i$ to $c_j$ (see Figure \ref{seams}). The arcs $a_{ij}$
are pairwise disjoint when $i\neq j$, and we may assume that there is a 
complete hyperbolic metric on $\Int S$ so that $\Int a_{ij}$ is
totally geodesic.  
\begin{figure}[htb] 
	\begin{center}
	\psfrag{a}{$a_{12}$}
	\psfrag{b}{$a_{23}$}
	\psfrag{c}{$a_{13}$}
	\psfrag{d}{$c_1$}
	\psfrag{e}{$c_2$}
	\psfrag{f}{$c_3$}
	\epsfig{file=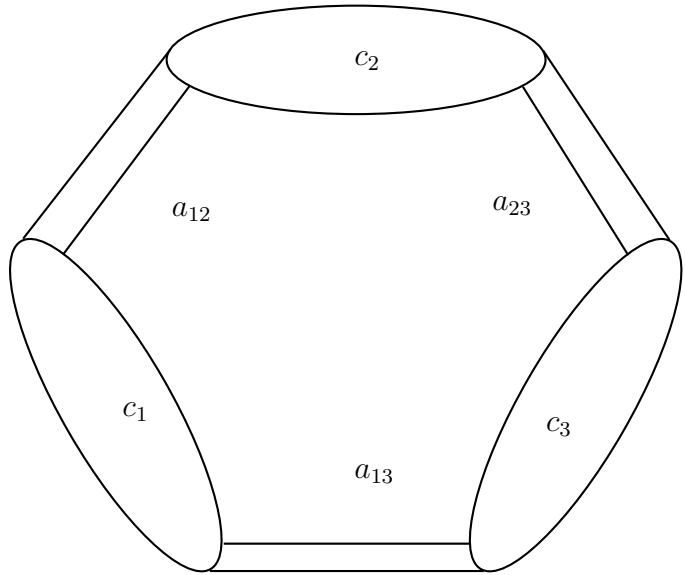,  height=.5\textwidth}
	\caption{\label{seams} Labelling a pants and three seams}
	\end{center}
\end{figure}
By taking a basepoint $x\in a_{12}$,
the map of the eyeglass graph $(c_1\cup c_2\cup a_{12}, x) \to (S,x)$ is a homotopy
equivalence. We may identify the generators of $\pi_1 (S,x)$
with $c_1$ and $c_2$, so that the third peripheral element
is $c_1 c_2$ corresponding to the boundary component $c_3$.  

There are two natural classes of representations of $\pi_1 S \to \PSL_2(\CC)$ for which the peripheral elements are parabolic.
The reducible representation is given by 
\begin{equation}
\rho(c_i)= \left(\begin{array}{cc}1 & z_i \\0 & 1\end{array}\right),
\end{equation}
where we have indicated matrix lifts of the generators
to $\SL_2(\CC)$. 
Another natural peripheral parabolic
representation is given by 
\begin{equation}
\rho(c_1)= \left(\begin{array}{cc}1 & 2 \\0 & 1\end{array}\right),\    
\rho(c_2)= \left(\begin{array}{cc}1 & 0 \\ -2 & 1\end{array}\right).
\end{equation}
The following proposition shows that up to conjugacy, these are the only peripheral parabolic representations of
$\pi_1 S$ (this is essentially due to Adams \cite{Adams85}).

\begin{proposition} \label{pprep}
Suppose we have a representation $\rho: \pi_1 S \to \PSL_2(\CC)$ such that the peripheral elements $\rho(c_1), \rho(c_2),
\rho(c_1 c_2)$ are all parabolic. Then either $\rho$ is reducible
and we may conjugate $\rho$ to Equation (1), or $\rho$ is
conjugate to Equation (2). 
\end{proposition}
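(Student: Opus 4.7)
The plan is to split into the reducible and irreducible cases, and in the irreducible case normalize the two parabolic generators so their fixed points are $0$ and $\infty$ and then use the parabolicity of $\rho(c_1 c_2)$ together with a remaining diagonal conjugation to force Equation (2).

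First I would dispose of the reducible case. A parabolic element of $\PSL_2(\CC)$ fixes a unique point of $\partial \HH^3$, and reducibility of $\rho$ means $\rho(c_1)$ and $\rho(c_2)$ share a fixed point. Conjugating that common fixed point to $\infty$ makes $\rho(c_1)$ and $\rho(c_2)$ upper triangular with $1$'s on the diagonal, which is exactly Equation (1).

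Next, assume $\rho$ is irreducible, so $\rho(c_1)$ and $\rho(c_2)$ have distinct parabolic fixed points. Conjugating, I may place these fixed points at $\infty$ and $0$ respectively, so that
\begin{equation*}
\rho(c_1)=\begin{pmatrix}1 & a \\ 0 & 1\end{pmatrix},\qquad
\rho(c_2)=\begin{pmatrix}1 & 0 \\ b & 1\end{pmatrix},
\end{equation*}
with $a,b\in\CC^{*}$ (nonzero by irreducibility). Then $\rho(c_1c_2)$ has trace $2+ab$ in any $\SL_2$-lift, so the parabolicity condition $\tr^2=4$ in $\PSL_2(\CC)$ becomes $ab(ab+4)=0$. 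Since $a,b\neq 0$, this forces $ab=-4$.

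Finally, the stabilizer in $\PSL_2(\CC)$ of the pair of fixed points $\{0,\infty\}$ consists of the diagonal matrices $\mathrm{diag}(t,t^{-1})$ (together with the swap, which I won't need). Conjugating by such a diagonal element sends $(a,b)$ to $(at^2,bt^{-2})$. Choosing $t^2 = 2/a$ gives $a\mapsto 2$ and hence $b\mapsto ab/2=-2$, producing Equation (2). The main thing to watch is the $\pm 2$ ambiguity of the trace in $\PSL_2(\CC)$, but this just contributes the extra solution $ab=0$ which is incompatible with irreducibility, so it causes no real difficulty; everything else is a short direct computation.
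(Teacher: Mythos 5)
Your proposal is correct and follows essentially the same route as the paper: split into reducible and irreducible cases, normalize the parabolic fixed points to $0$ and $\infty$, and impose parabolicity on $\rho(c_1c_2)$ via the trace. The only cosmetic difference is that the paper fixes $\rho(c_1)$ to have entry $2$ at the outset (absorbing the diagonal conjugation immediately) and then discards the $z=0$ solution as reducible, whereas you carry general $a,b$, derive $ab=-4$, and normalize by the diagonal stabilizer at the end; both are the same calculation in a different order.
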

\begin{proof}
If $\rho(c_1)$ and $\rho(c_2)$ fix the same point on $\partial \HH^3$,
then we may send this point to $\infty$, so that the representation
is conjugate to the reducible representation given above in equation (1). 
Otherwise, assume that $\rho(c_1)$ and $\rho(c_2)$ fix
different points on $\partial \HH^3$. By a further conjugation,
we may send the fixed point of $\rho(c_1)$ to $\infty$, and
the fixed point of $\rho(c_2)$ to $0$ so  that we may assume 
that 
\begin{equation}
\rho(c_1)= \left(\begin{array}{cc}1 & 2 \\0 & 1\end{array}\right), 
\rho(c_2)= \left(\begin{array}{cc}1 & 0 \\ z & 1\end{array}\right),
\rho(c_1 c_2) =  \left(\begin{array}{cc}1+2 z & 2 \\ z & 1\end{array}\right).
\end{equation}
Since $\rho(c_1 c_2)$ must be parabolic, we have 
$\tr(\rho(c_1 c_2) )= 2+2z = \pm 2$, so $z=0$ or $z=-2$.
The case $z=0$ means that $\rho(c_2)$ is trivial,
so that the representation is reducible, contrary to 
assumption. Thus, we must have $z=-2$, and
we have the representation given in Equation (2). 
\end{proof}

\section{Whitehead link complement} 

There is a well-known class of immersed pants in 3-manifolds
which comes from drilling out a knot bounding an immersed disk with a single clasp
singularity in a 3-manifold (see Figure \ref{clasp}). Taking a regular neighborhood
of the disk with a clasp gives a solid torus. Since the boundary
of the disk forms a knot inside of the
solid torus, if the drilled manifold is to be hyperbolic, the
torus bounding the solid torus must be compressible in the
complement of the knot,
and therefore bounds a solid torus on the outside as
well, or else bounds a torus $\times I$. We immediately 
see that the only way this may happen is that we have
Dehn filling on one component of the Whitehead link
complement, or the Whitehead link complement itself.

\begin{figure}[htb] 
	\begin{center}
	\epsfig{file=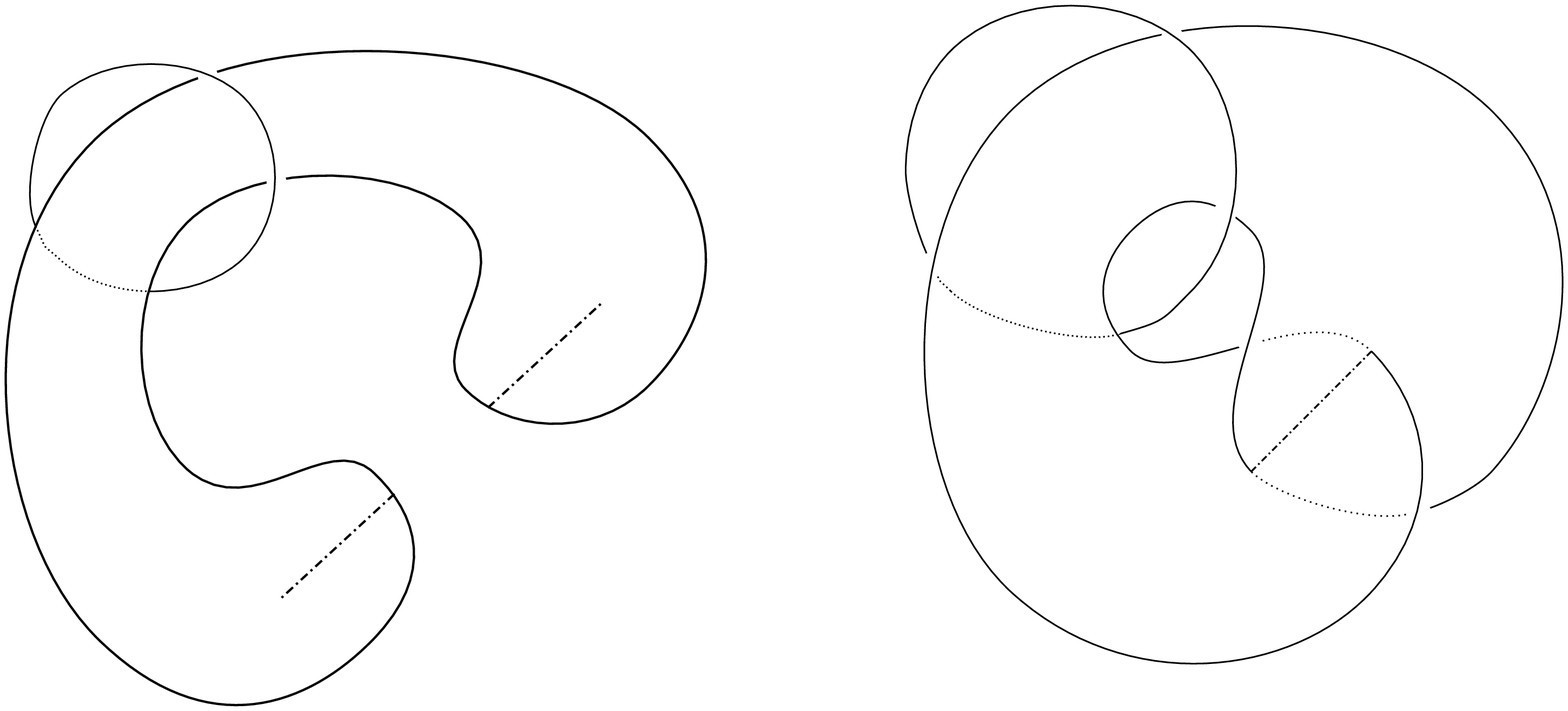, width=\textwidth, height=.5\textwidth}
	\caption{\label{clasp} A twice-punctured disk in the Whitehead
	link complement}
	\end{center}
\end{figure}

In fact, we may 
 use this immersed twice-punctured disk
bounding one component of the Whitehead link to parameterize
(generalized) hyperbolic Dehn filling on the other boundary component. Let $W\subset S^3$ be the Whitehead link, and
let $M=S^3 \split \mathcal{N}(W)$ be its complement. Let $\partial M=T_1\cup T_2$ be the two torus boundary components of $M$ corresponding to the two components of $W$. Let $S$ be a pants, and  $f: S \to M$ be 
the immersion of $S$ into $M$ such that $f(\partial S)\subset T_2$.
Suppose that we have a non-elementary representation $\rho: \pi_1 (M) \to \PSL_2(\CC)$,  such that  $\rho(\pi_1 T_2) < \PSL_2(\CC)$ is parabolic. 
Under the map $f:S\to M$ we may
assume that $f(a_{13})=f(a_{23})$ is the set of double points
of the immersion.  Let $q\in \pi_1(M)$ be an element
which sends $a_{23}$ to $a_{13}$, in such a way that
$\rho(q)$ sends the fixed point of $\rho(c_2)$ to the
fixed point of $\rho(c_1c_2)$, and the fixed point of 
$\rho(c_1c_2)$ to the fixed point of $\rho(c_1)$ (see Figure \ref{cover}).

\begin{figure}[htb] 
	\begin{center}
	\psfrag{a}{$a_{12}$}
	\psfrag{b}{$a_{23}$}
	\psfrag{c}{$a_{13}$}
	\psfrag{d}{$c_1$}
	\psfrag{q}{$q$}
	\psfrag{e}{$c_2$}
	\psfrag{h}{$\HH^3$}
	\epsfig{file=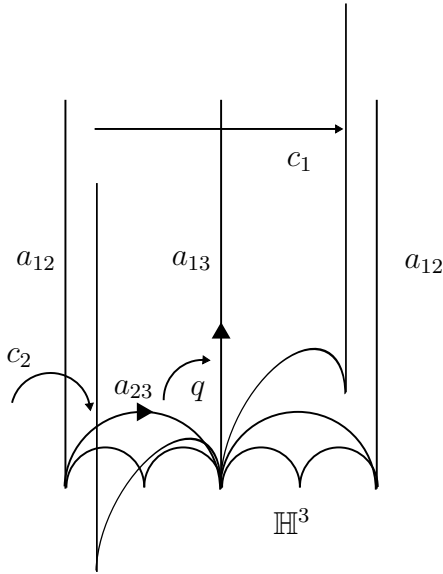, height=.5\textwidth}
	\caption{\label{cover} The universal cover of the immersion
	$f:S \to M$}
	\end{center}
\end{figure}
These satisfy the relations $c_1q^2 = q^2 c_2$ and $[q^{-1}c_1 q, c_1c_2] = 1$.
One may check that $\{ c_1, c_2, q\}$ generate $\pi_1(f(S))$,
and therefore $\pi_1(M,x)$,
because $M$ has a retraction onto the 2-complex $f(S)\cup T_2$,
since $M\split f(S) \cong T_1\times [0,1]$ (see the discussion
at the beginning of the section), and $\pi_1(f(S))$ generates $\pi_1(f(S)\cup T_2)$. 
Then $\rho(f_{\#}(\pi_1 S))$ will be a representation for which the
three peripheral elements of $\pi_1 S$ are parabolic. We may identify
$\rho(f_{\#}(\pi_1 S))=\langle C_1, C_2\rangle$, where 
$C_i=\rho(f_{\#}(c_i))$. 
There are two cases
depending on whether this representation is reducible or not. 

If $\langle C_1, C_2\rangle$ is reducible, we may assume that
$\rho$ is conjugated so that this subgroup looks like Equation (1).
Then $\rho(q)$ must also fix $\infty$ since $q^{-1}c_1 q$ and
$c_1 c_2$ commute and therefore both fix $\infty$. In this case we see
that $\rho$ is reducible. 

If $\langle C_1, C_2 \rangle$ is parabolic and irreducible, we
may assume $\rho$ is conjugated so that this subgroup looks
like Equation (2) by Proposition  \ref{pprep}. Then $a_{12}$ lifts to a geodesic arc in $\HH^3$
connecting $0$ and $\infty$, $a_{13}$ connects $\infty$ and $1$,
and $a_{23}$ connects $0$ and $1$ (see Figure \ref{cover}). Keeping track of orientations,
$\rho(q)(0)=1$, and $\rho(q)(1)=\infty$, which implies that
\begin{equation}
\rho(q) = \left(\begin{array}{cc}a^{-1}-a & a \\-a & a\end{array}\right).
\end{equation}
Thus, we see that irreducible representations of $\pi_1 M$
which are parabolic on one cusp are parameterized by a single
number $a\in \CC-\{0\}$. 

More importantly, the above discussion implies the following
intermediate result, which will be used in the proof of the main theorem:
\begin{proposition} \label{Whiteheadimage}
Let $N$ be an orientable compact 3-manifold, such that
$\Int N$ admits a complete hyperbolic metric of finite volume. 
Suppose that $f:(S,\d S) \to (N, \d N)$
is an essential map such that $f(a_{13})=f(a_{23})$. Then
$N$ is obtained by Dehn filling on one component of the
Whitehead link complement. 
\end{proposition}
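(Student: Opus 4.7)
The plan is to show that a regular neighborhood of $f(S)\cup\d N$ in $N$ is homeomorphic to its counterpart in the Whitehead link complement $M$, so that $N$ differs from $M$ only by a Dehn filling on one cusp.

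First I would apply Proposition \ref{pprep} to $\rho\circ f_{\#}:\pi_1(S)\to\PSL_2(\CC)$, where $\rho$ is the holonomy of $N$. Finite volume makes $\rho$ faithful and non-elementary, and the three peripheral classes of $\pi_1(S)$ go to parabolics. The reducible alternative is quickly ruled out: if $\rho\circ f_{\#}$ were reducible, then $\rho(f_{\#}(\pi_1 S))$ would be abelian, and $\rho$ being faithful together with $f$ essential (so $f_{\#}$ injective) would force $\pi_1(S)\cong F_2$ to be abelian, a contradiction. Hence $\rho\circ f_{\#}$ is conjugate to Equation~(2), with three distinct parabolic fixed points $\infty, 0, 1$. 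The clasp $f(a_{13})=f(a_{23})$ now produces an element $q\in\pi_1(N)$ cyclically permuting these fixed points and satisfying $c_1 q^2 = q^2 c_2$ and $[q^{-1}c_1 q, c_1 c_2]=1$, with $\rho(q)$ of the form in Equation~(4). In particular the clasp forces the three boundary curves $f(c_i)$ to share endpoints, so that $f(\d S)$ lies on a single boundary torus $T_2$ of $N$.

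Next I would analyze the regular neighborhood $\mathcal{U}$ of the 2-complex $f(S)\cup T_2$ in $N$. Its homeomorphism type is determined by purely local data — the clasp identification and the isotopy classes of the three curves $f(c_i)$ on $T_2$ — and both are pinned down by the previous step. The same data describe the analogous neighborhood $\mathcal{U}_M$ in $M$, and the discussion preceding this proposition establishes $\mathcal{U}_M\cong M\setminus(T_1\times(0,1])$ via $M\split f(S)\cong T_1\times[0,1]$. Hence $\mathcal{U}\cong\mathcal{U}_M$, and $\d\mathcal{U}\setminus T_2$ is a single torus $T'$.

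Finally, decompose $N=\mathcal{U}\cup_{T'} V$. Since $\Int N$ is atoroidal and finite-volume hyperbolic, $V$ must be either $T^2\times[0,1]$ (so that $T'$ is a cusp and $N\cong M$) or a solid torus (giving a Dehn filling on the $T_1$ cusp of $M$). In either case $N$ is a Dehn filling of the Whitehead link complement on one cusp. The main obstacle is the middle step: rigorously matching $\mathcal{U}$ with $\mathcal{U}_M$, which requires careful bookkeeping of the clasp's local topology and the way the peripheral curves sit on $T_2$ — precisely what Proposition \ref{pprep} is set up to control.
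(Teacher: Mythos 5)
Your approach is genuinely different from the paper's, and the difference is not cosmetic.  The paper never attempts a cut-and-paste identification of a neighborhood of $f(S)$.  Instead, after normalizing $\rho\circ f_{\#}$ via Proposition~\ref{pprep} and producing $Q$ as in Equation~(4) (which your proof does correctly), it observes that $\rho(\pi_1 M)\le\G$ and then proves $\rho(\pi_1 M)=\G$ by a volume comparison (Francaviglia--Klaff gives $\vol(\HH^3/\rho(\pi_1 M))\le\vol(\Int M)$; a proper finite-index inclusion would force $\vol(\Int N)<1.84$, contradicting Cao--Meyerhoff).  It then invokes the Neumann--Reid parametrization of irreducible $\PSL_2(\CC)$-representations of $\pi_1 M$ with one cusp parabolic to see that $\G$ arises from a Dehn-surgery-type cone structure on the other cusp, and rules out cone angles $>2\pi$ by volume again.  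The representation-theoretic route is robust precisely because it never needs to know what the singular set of $f$ looks like topologically.

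The gap in your argument is the middle step, which you flag yourself but underestimate.  You assert that the homeomorphism type of $\mathcal{U}=\MN(f(S)\cup T_2)$ is ``determined by purely local data --- the clasp identification and the isotopy classes of the three curves $f(c_i)$ on $T_2$ --- and both are pinned down by the previous step.''  Neither clause is justified.  First, the hypothesis $f(a_{13})=f(a_{23})$ does not say this is the \emph{entire} double-point locus: a totally geodesic immersed pants could a priori have other arcs or circles of self-intersection, and those would change $\mathcal{U}$.  (The paper has to establish ``exactly one clasp'' separately in the main theorem; the proposition is stated so that it does not depend on that.)  Second, the isotopy classes of $f(c_1),f(c_2),f(c_3)$ on $T_2$ are not controlled by the normalization of $\rho\circ f_{\#}$ alone --- knowing the three holonomies are parabolic with fixed points $\infty,0,1$ tells you nothing a priori about their slopes on $\p N$.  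Third, $\pi_1(f(S)\cup T_2)$ is generated by $\langle C_1,C_2,Q\rangle$ \emph{in the Whitehead link complement} because there $M\split f(S)\cong T_1\times I$; but that is exactly the conclusion you are trying to reach, not a fact available at this stage in $N$.  Consequently you have not shown that $\p\mathcal{U}\setminus T_2$ is a single torus, nor that $V=N\setminus\Int\mathcal{U}$ is connected, and the decomposition $N=\mathcal{U}\cup_{T'}V$ does not yet exist.  This is also where the possibility that $\langle C_1,C_2,Q\rangle$ is a \emph{proper} finite-index subgroup of $\G$ would have to be excluded; the paper does this with volume, and your outline has no substitute.  To repair your approach you would essentially have to import the paper's volume estimates to pin down the group, at which point the Neumann--Reid parametrization finishes more cleanly than the neighborhood matching.
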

\begin{proof}
Let $\G<\PSL_2(\CC)$ be the holonomy of $\pi_1 N$. 
Let $\langle C_1, C_2 \rangle < \PSL_2(\CC)$ correspond
to $f_{\#}(\pi_1 S)$, where we see that $C_i$ and $C_1C_2$ are
parabolic since $\Int N$ has finite volume. If $\langle C_1, C_2\rangle$ is reducible, then
$\pi_1 S$ isn't essential, so we may assume that $\langle C_1, C_2\rangle$ is normalized as in Equation (2) by Proposition \ref{pprep}. 
Let $Q$ be an element
of $\G$ sending a lift of $a_{13}$ to a lift of $a_{23}$, so 
that $Q$ is normalized as in Equation (4). Then we see
$\rho(\pi_1 M) < \G$, where $M$ is the Whitehead link
complement and $\rho$ is the representation defined by 
Equations (2) and (4). Moreover, $\rho$ is parabolic on
one boundary component  of $M$. Since $\G$ is
discrete, $\rho(\pi_1 M)= \G' <\G$ is a finite-index subgroup.
By \cite{FrancavigliaKlaff06}, $\vol(\HH^3/\G') \leq \vol(\Int M) = 3.66\ldots$. If $[\G:\G'] >1$, then $\vol(\Int N) =\vol(\HH^3/\G) < 1.84$,
which contradicts $\vol(\Int N) \geq 2.0298$ by \cite{CM}. 
Thus, we see that $\G'=\G$, so $\G$ is the discrete torsion-free homomorphic
image of $\pi_1 M$. In this case, we apply the analysis of \cite{NR92}
to see that every discrete irreducible homomorphic image of $\pi_1 M$ in $\PSL_2(\CC)$ with one cusp remaining parabolic must
come from a hyperbolic structure on the Whitehead
link complement with Dehn surgery type singularity along
the other boundary component. 
This follows from the parameterization of irreducible
representations $\rho(\pi_1 M)$
given in the proof of Theorem 6.2 of \cite{NR92} in
terms of a single parameter $z \in \CC-\{0\}$. 
Their parameter $z=x-x^{-1}$, where $x, -x^{-1}\in \CC-\{0,\pm 1\}$
are the complex parameters of two pairs of tetrahedra in an
ideal triangulation of $M$ corresponding to a hyperbolic
structure of $\Int M$ with $T_2$ parabolic and a
Dehn surgery type singularity along $T_1$. The
parameter $z$ must correspond to our parameter
$a\neq 0$ up to a M\"obius transformation, since any proper subset
of $\CC-\{0\}$ is not holomorphically equivalent to $\CC-\{0\}$.
Since $\G$ is discrete and torsion-free, we conclude that 
$\rho(\pi_1 T_1)$ must be a cyclic group. 
Therefore, the completion of $\Int M$ gives a cone-manifold structure $M'$ with
cone angle an integral multiple of $2\pi$ at the Dehn
surgery type singularity along $T_1$ (see \cite{CH03} for a discussion
of cone manifolds). If the 
cone angle is not $2\pi$, this
implies that the map $M' \to \HH^3/\G$ must be a branched 
immersion which is at
least two-to-one, which would imply that $\vol(\HH^3/\G)< 1.84$,
giving a contradiction as before. 
\end{proof}

\section{Immersed pants}

The goal of this section is to prove the following theorem
which shows that these examples give the only way
that a pants may be non-trivially immersed in a hyperbolic
3-manifold. 

\begin{theorem}
Let $M$ be an orientable 3-manifold with boundary and interior admitting a complete hyperbolic metric. Let $S$ be
a pants, and $f:(S,\partial S) \to (M,\partial M)$ an immersion. 
Then either $f$ is relatively homotopic to an embedding, or 
$M$ is obtained by (possibly empty) Dehn filling on 
one boundary component of the Whitehead link complement,
and $f$ may be homotoped to have a single clasp
singularity, {\it i.e.} a single embedded arc of double points as in Figure \ref{clasp}. 
\end{theorem}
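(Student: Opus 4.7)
My plan is to realize $f$ as a totally geodesic immersion, classify its double-point arcs combinatorially, and then invoke Proposition \ref{Whiteheadimage}.

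First I reduce to the case that $f$ is essential. If $f_\#$ is not injective, standard compression arguments using incompressibility of the cusp tori let me homotope $f$ to an embedding. So assume $f$ essential. Since $\partial M$ is cuspidal, the holonomy $\rho$ sends $c_1,c_2,c_1c_2$ to parabolics, and Proposition \ref{pprep} says $\rho\circ f_\#$ is either reducible or conjugate to Equation~(2). Reducibility is impossible, since it would force the non-abelian group $\pi_1 S\cong F_2$ to have abelian image under a faithful representation. So we are in the irreducible case, and rigidity of this representation lets me homotope $f$ rel $\partial S$ to a totally geodesic immersion whose universal cover is a plane $\mathbb{H}^2_0\subset\HH^3$ with ideal boundary $\RR\cup\{\infty\}$ and cusp orbit $\QQ\cup\{\infty\}$ under the resulting $\Gamma(2)$-action.

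Next I analyze the self-intersections of $\Sigma=f(S)$. Each component of the double-point locus is a geodesic arc whose preimage in $S$ is a pair of disjoint simple geodesic arcs with ideal endpoints at cusps (the thrice-punctured sphere has no closed geodesics). The key combinatorial fact, read off the Farey tessellation of $\mathbb{H}^2_0$ under $\Gamma(2)$, is that the only simple proper geodesic arcs between cusps of the thrice-punctured sphere are the three seams $a_{12}, a_{13}, a_{23}$. So every double-point arc encodes an identification of two distinct seams. If there are no double points, $f$ is an embedding. Otherwise, after relabeling the cusps of $S$, we may assume $f(a_{13})=f(a_{23})$, and Proposition \ref{Whiteheadimage} identifies $M$ as a Dehn filling on one cusp of the Whitehead link complement.

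To conclude that $f$ has a \emph{single} clasp, I compare with the standard immersion $f_0:S\to M$ obtained by drilling the twice-punctured disk of Figure \ref{clasp}. Both $f$ and $f_0$ are essential with peripheral holonomy of the form in Equation~(2); Proposition \ref{pprep} makes this representation unique up to conjugation in $\PSL_2(\CC)$, and since both peripheries land in the same cusp of $M$, the conjugating element may be taken in $\pi_1 M$. Thus $f_\#(\pi_1 S)=(f_0)_\#(\pi_1 S)$ as subgroups, and homotopy-uniqueness of $\pi_1$-injective proper maps into a hyperbolic 3-manifold homotopes $f$ rel $\partial S$ to $f_0$, which has a single clasp. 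The main obstacle I expect is the combinatorial lemma identifying the three seams as the only simple cusp-to-cusp geodesic arcs on a thrice-punctured sphere; with that in hand the theorem follows from direct applications of Propositions \ref{pprep} and \ref{Whiteheadimage}.
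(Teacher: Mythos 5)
Your outline correctly identifies Propositions~\ref{pprep} and \ref{Whiteheadimage} as the backbone, but it has several genuine gaps that the paper's actual proof spends most of its length closing.

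First, you write ``Since $\partial M$ is cuspidal\ldots'' without justification. The theorem's hypotheses allow $\Int M$ to have infinite volume and $\partial M$ to be of higher genus, in which case the restrictions $\rho\circ f_\#(c_i)$ need not be parabolic and Proposition~\ref{pprep} does not apply. The paper's first (and lengthy) step is precisely to prove that if $f$ is not an embedding then $\Int M$ has finite volume, via a convex-core area comparison: one shows $\chi(\partial CH(N'))$ is even for the components of $\Int M\split f(\Int S)$, forces $\operatorname{Area}(\partial CH(N'))\geq 4\pi$, and derives a contradiction with the bending lamination of $\partial CH$ being supported on a simple closed geodesic, which a pants does not carry. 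This step cannot be skipped.

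Second, your key combinatorial lemma is false as stated and, even corrected, does not do what you need. The three seams $a_{12},a_{13},a_{23}$ are not the only simple essential proper geodesic arcs on a thrice-punctured sphere: there are also the three arcs $a_{ii}$ joining a cusp to itself, and the paper's proof explicitly has to rule out identifications of the form $f(a_{13})=f(a_{33})$ and $f(a_{12})=f(a_{13})=f(a_{23})$ in the cases $a=4$ and $a=b=2$. More seriously, the preimage in $S$ of the double-point locus of a totally geodesic immersion need not consist of \emph{simple} arcs, nor of proper arcs at all: the double set of two intersecting totally geodesic planes is a geodesic whose ideal endpoints are just two points of the limit-set circle, which a priori need not be parabolic fixed points, so closed or non-simple double geodesics are possible until one rules them out. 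The paper does this quantitatively, not combinatorially: it establishes that each $f(c_i)$ is embedded, that $f(\partial S)$ meets exactly one torus boundary component (via Thurston-norm/embedded-surface arguments), that $|\Delta(f(c_i),f(c_j))|\leq 1$ via the horocycle inequality $l(h_i)l(h_j)\leq 4$ and the bound $\operatorname{Area}(\partial\overline H)>3.35$ of Cao--Meyerhoff, that $f_*[\partial S]\neq 0$ via a parity count of clasp endpoints, and finally runs a delicate horoball packing computation to exclude every identification except $f(a_{13})=f(a_{23})$.

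Third, you assert the double-point locus is a \emph{single} arc by appealing to a homotopy-uniqueness argument after identifying the target as a Whitehead Dehn filling, but you only reach that identification after already assuming there is exactly one double arc identifying two seams; the argument is circular as written. In the paper this issue evaporates because the horoball analysis shows the only consistent configuration is $f(a_{13})=f(a_{23})$ (with orientation reversed, to avoid an impossible parabolic identification), and Proposition~\ref{Whiteheadimage} then pins down $M$ and hence the clasp picture.

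In short, your sketch replaces the paper's quantitative hyperbolic-geometry core (finite-volume claim, intersection-number bounds, horoball packing) with an incorrect combinatorial shortcut; the conclusion is not reachable along the route you describe without substantially reinstating those arguments.
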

\begin{proof}
We may assume that the map $f$ is homotoped so
that the restriction $f_{| \Int(S)}$ is totally geodesic with respect to the canonical metric on $\Int(M)$. We may identify $\Int M= \HH^3/\G$,
where $\pi_1 M\cong \G < \PSL_2(\CC)$ is a torsion-free discrete group. Throughout the argument, we will fix orientations on $S$
and $M$, as well as the induced orientations on $\d S=c_1\cup c_2\cup c_3$ and on $\d M$. The proof will proceed by deducing
a sequence of restrictions on the manifold $M$ and the 
nature of the immersion $f$. 

{\bf Claim:} {\it  If $f$ is
not an embedding, then $\Int M$ must have finite volume. }
This is achieved via an area estimate. Consider 
$N=\Int M \split  f(\Int S)$, which has a hyperbolic metric with
convex boundary. For each component $N_i $ of $N$,
there is a convex core $CH(N_i)$ (which we take to 
be empty if $\chi(N_i)\geq 0$). Each $\partial CH(N_i)$
has an intrinsic hyperbolic metric. If $\partial CH(N_i)$
has a cusp, then there is some neighborhood of the 
cusp which is totally geodesic inside of $N_i$. This cusp must be
parallel to a cusp of $\partial N_i$, which corresponds
to an embedded cusp of $f(\Int S)$. But this cusp of
$f(\Int S)$ has two sides, and thus there is a component
$N_j$ on the other side of this cusp (it's possible that $j=i$),
with a cusp of $\partial N_j$, and therefore of $\partial CH(N_j)$ (which will be a distinct cusp of $\partial CH(N_i)$ if $i=j$).
Therefore, the cusps of $\partial CH(N)$ must come in pairs,
which implies that $\chi(\partial CH(N))$ must be even. 
Throw out all components $N_i$ of $N$ with $CH(N_i)=\emptyset$
to obtain $N'\subset N$ with $\chi(N')<0$ if $N'\neq \emptyset$. 
Then $\chi(\partial CH(N'))<0$, and therefore $\chi(\partial CH(N'))\leq -2$,
since it is even. 

There is a nearest point 1-Lipschitz retraction
$N'\to CH(N')$, which is area decreasing when restricted
to a map $\partial N' \to \partial CH(N')$ \cite{EM87}. We have $Area(\partial CH(N'))=-2\pi \chi(\partial CH(N'))\geq 4\pi$. But $4\pi = Area(\partial N) \geq Area(\partial CH(N'))$, which implies that $Area(\partial N')=Area(\partial CH(N))$. This can occur if and only if $N=CH(N')$. Since $\partial CH(N')$
is bent along a compact measured lamination and  $\partial N$ is piecewise
composed of pieces of $f(S)$, $\partial N$ must be bent along
the double points of $f(S)$. But then $\partial N$ must be bent along
 a simple closed geodesic. This is impossible, since there
are no simple closed geodesics in $S$. Thus, we have a contradiction,
and we conclude that $N'=\emptyset$. This implies that all
components of $\Int N$ are balls, tori, or $T^2\times \RR$.
This implies that $N$ has finite volume.

{\bf Claim:} {\it For each boundary component $c$ of $\d S$, $f(c)$ is embedded. }
Otherwise, since $f$ is totally geodesic,  $f$ would have degree at least two onto its image. Then $f$ would factor
through a covering  $f':S\to S'$ of degree $>1$, which is impossible since $\chi(S)=-1$ so $S$ does not cover
any surface $S'$ non-trivially.

{\bf Claim:}  {\it The image of the boundary $f(\d S)$ meets
at most two boundary components of $M$. }

Suppose that $f(\d S)$ meets three boundary components of $M$. Then each component of $\d S$
maps to a distinct boundary component of $M$.
Thus, for each component $c\subset \d S$, $0\neq f_{\ast}[c] \in H_{1}(\d M)$, and therefore $0\neq f_{\ast}([S]) \in H_{2}(M,\d M)$. Since $\Int(M)$ is
hyperbolic, each homology class of $H_{2}(M,\d M)$ has Thurston
norm $\geq 1$ \cite{Th3}. There exists an embedded orientable incompressible surface $\S\subset M$, $f_{\ast}([S])=[\S]\in H_{2}(M,\d M)$, by \cite{Ga2}, and such that $\chi_{-}(\S)=\chi_{-}(S) = 1$, which implies that $\S$ is connected and $\chi(\S)=-1$. Since 
each boundary component of $S$ goes to a different boundary
component of $M$ under the map $f$, we see that  $\S$ is an embedded pants such that $\Int \S$ is totally geodesic. Also,
each component of $\d \S$ is parallel to a component of $f(\d S)$, since
each component of $f(\d S)$ is homologous to a component of $\d \S$. 
Since $f$ and $\S$ are both totally geodesic, the boundary 
components  $f(\d S)\cap \d \S = \emptyset$, since otherwise
we would have $f(S)=\S$, which contradicts the assumption that
$f$ is not embedded. Then $f^{-1}(\S)$ is a collection of embedded
geodesic curves, since $\S$ is embedded. But these curves miss 
$\d S$, which means $f^{-1}(\S) = \emptyset$, since there are no
closed embedded geodesics in $\Int S$. But this is also a contradiction,
since $M\backslash f(S)$ is a union of regions with abelian
fundamental group, and
thus cannot contain the embedded pants $\S$. 

{\bf Claim:} {\it The image of the boundary $f(\d S)$ 
meets only one boundary component of $M$.} 

Suppose  that $f(\partial S)\subset T_{1}\cup T_{2} \subset \d M$, where
$T_{i}$ is a torus. Let $\d S = c_{1}\cup c_{2} \cup c_{3}$.
Then we may assume that $f(c_{1}\cup c_{2})\subset T_{1}$, and
$f(c_{3})\subset T_{2}$, since we are assuming for contradiction
that $f(\d S)$ is not contained in a single boundary component. 
This implies that $0\neq f_{\ast}([c_{3}])\in H_{1}(T_{2})$,
and thus $0\neq f_{\ast}([S]) \in H_{2}(M,\d M)$. Let $\S\subset M$
be an embedded orientable incompressible surface such that $[\S] = f_{\ast}([S])\in H_{2}(M,\d M)$, and $\chi_{-}(\S)=1$. Then $\S$ is 
either a pants, or a punctured torus. First assume that $\S$ is a pants. Then we may assume that $\Int(\S)$ is totally geodesic in $\Int(M)$.
Let $\d \S = c_{1}'\cup c_{2}'\cup c_{3}'$, where $c_{3}'\subset T_{2}$  is
a curve parallel to $f(c_{3})$, which must be disjoint since $f(S)$ and $\S$ do not coincide. 
Then $f^{-1}(\S)\subset S$ consists of embedded geodesics,
which must be disjoint from $c_{3}$.  Since $c_{1}'$ and $c_{2}'$ 
are parallel, and $[c_{1}']+[c_{2}']=f_{\ast}([c_{1}]+[c_{2}])\in H_{1}(T_1)$, either $f(c_{i})\cap \d \S =\emptyset$ for both $i=1,2$,
or $|f(c_{i})\cap \d \S|=2$. In the first case, we obtain a 
contradiction as before in the case that we assumed that $f(\partial S)$ meets three boundary components of $M$. In the second case, we have a contradiction, since
there is no embedded geodesic curve  $\g \subset S$ such that
$\partial \g\subset c_1\cup c_2$ and $|\g\cap c_i|=2$, $i=1,2$.

Thus, we must be in the case that $\S$ is a punctured torus. 
We may assume that $\S$ is quasifuchisan,
that is $\S$ has no accidental parabolics, otherwise there would be
an embedded pants homologous to $\S$, giving a contradiction as before. 
We may assume that $f(c_{3}) \cap \d \S = \emptyset$, 
by an isotopy, since these curves are homologically
parallel, and therefore isotopic in $\partial M$. We have $f^{-1}(\S)$ is an embedded union of curves
on $S$ which miss $\d S$. Homotope $f$ so that the number of
components of $f^{-1}(\S)$ has 
minimal cardinality. We may assume that each component
of $f^{-1}(\S)$ is essential in $\S$, since otherwise a homotopy
would reduce the cardinality of $|f^{-1}(\S)|$. Therefore if $f^{-1}(\S)$ is non-empty, 
each component
of $f^{-1}(\S)$ is an embedded closed curve which is
boundary parallel in $S$. An outermost such curve $c$ on $S$ cobounds
an annulus $A$ with a boundary component $c_{i}$ of $S$. 
Then $f(c)$ represents a parabolic element in $\S$, and therefore
must be boundary parallel in $\S$ since $\S$ has no accidental parabolics. 
Thus, since $M$ is acylindrical, we may homotope $f$ to reduce the number of components
of $f^{-1}(\S)$, a contradiction. If $f^{-1}(\S)$ is empty, we obtain 
a contradiction as before. Thus, we conclude that $f(\d S)$ 
meets at most one boundary component of $M$.

{\bf Claim:} {\it $f(\d S)$ is not embedded.} 
For sake of contradiction, assume that $f(\d S)\subset \d M$
is embedded. In this case, again we have 
$0\neq f_{\ast}([\d S])\in H_{1}(\d M)$, so we have 
$0\neq f_{\ast}([S]) \in H_{2}(M, \d M)$. Thus, there is
an embedded incompressible norm-minimizing surface 
$\S\subset M$, such that $\chi_{-}(\S)=1$, $f_{\ast}([S])= [\S]\in H_{2}(M,\d M)$. 
If $\S$ is a pants, then $\d \S$ must be parallel to $f(\d S)$,
since they are homologically parallel. We obtain a contradiction
as before in the case that $M$ had three boundary components. 
If $\S$ is a punctured torus with no accidental parabolic, then $f^{-1}(\S)\subset S$
is a collection of embedded closed curves, which are therefore
boundary parallel. As in the previous paragraph, we homotope
$f$ so that $f^{-1}(\S)$ has fewer components, until $f^{-1}(\S)$
is empty, which gives a contradiction as before, proving that
$f(\d S)$ is not embedded.

We may now assume that $f(\d S)$ meets only one boundary component of $M$, and that the curves $f(c_{i})$ are not all parallel
in $\d M$. We introduce some further notation now to be used
throughout the rest of the proof. 
Let $H \subset \Int M$ be a maximal open horocusp
containing neighborhoods of the ends of $f(\Int S)$ (thus homeomorphic to $T^2\times \RR$). 
Then $\overline{H}\subset \Int M$ will have self-tangencies. 
The preimage of $H$ in $\HH^{3}$ is a family of horoballs invariant
under the action of $\G\cong\pi_{1}(M)$. The preimage $f^{-1}(H)\subset \Int S$
contains a collection of horocycles in $\Int S$ surrounding $\d S$. There are three curves 
$h_{i} \to \Int S$,  such that $\cup_{i=1}^{3} h_{i} \subset f^{-1}(\d \overline{H})$ and 
such that $h_{i}$ is homotopic to $c_{i}$ in $S$. The curves $h_{i}$ might 
not be embedded, since they may have self-tangencies mapping to the 
self-tangent points of $\overline{H}$ under the map $f$. The curves
$f(h_{i})\subset \d\overline{H}$ are geodesics in the intrinsic euclidean
metric on $\d \overline{H}$.

{\bf Claim:} 
For $i\neq j$, we have $l(h_{i})l(h_{j})\leq 4$. 

This may be shown by 
a simple computation in hyperbolic geometry. One way to see this is
to expand $h_{i}$ and $h_{j}$ keeping them horocycles, until they 
become tangent horocycles $h_{i}', h_{j}'$. Then $l(h_{i}')l(h_{j}')=4$.
This may be shown by shrinking the longer of the two cycles, while
expanding the smaller, until we obtain two tangent horocycles with the same
length. This operation preserves the product of the lengths, since
if we shrink a distance $d$, then the length gets multiplied by $e^{-d}$,
while the expanded horocycle has length multiplied by $e^{d}$, keeping
the product constant. Once we reach equal size tangent horocycles,
both have length 2, so the product is 4.

If $T$ is a torus, let $\D: H_{1}(T) \times H_{1}(T) \to \ZZ$ be the algebraic intersection number. For embedded oriented curves $a,b \subset T$,
we will use the notation $\D(a,b)=\D([a],[b])$.

{\bf Claim:}  $\D(f(c_i),f(c_j))\leq 1$, $\forall i,j$. 

If $f(h_{i}), f(h_{j})$ are horocycles in $\d \overline{H}$, 
then $|\D(f(h_i), f(h_j))| \area(\d\overline{H})  \leq l(h_{i})\cdot l(h_{j})$. 
This implies that for any $i\neq j$, if $|\D(f(h_{i}),f(h_{j}))| >1 $,
then $\area(\d \overline{H}) \leq 2$. But this is a contradiction,
by \cite[Prop. 5.8]{CM} which
states that the area of $\d \overline{H}$ is $\geq 3.35$. 
Thus, we conclude that $|\D(f(c_i),f(c_j))|\leq 1$, $\forall i,j$.

{\bf Claim:} 
{\it We may assume that $0\neq f_{\ast}([\d S]) \in H_{1}(\d M)$. }

Suppose that $f([\d S]) =0 \in H_{1}(\d M)$, so that
$[f(c_{3})] = -[f(c_{1})]-[f(c_{2})]$ (with appropriately chosen orientations).  
Then 
$$\D( f(c_{1}),f(c_{2}) )+\D(f(c_{1}),f(c_{3})) + \D(f(c_{2}),f(c_{3}))
=$$
$$\D(f(c_{1}),f(c_{2})) + \D(f(c_{1})+f(c_{2}), -f(c_{1})-f(c_{2})) = \D(f(c_{1}),f(c_{2})),$$
since $\D$ is bilinear and skew-symmetric. But we also have that 
$|\D(f(c_{1}),f(c_{2}))| + |\D(f(c_{1}),f(c_{3}))|+|\D(f(c_{2}),f(c_{3}))|$ represents
the number of endpoints of the arcs of self-intersection of the map $f:S\to M$,
and therefore must be even. Therefore
$\D( f(c_{1}),f(c_{2}) )+\D(f(c_{1}),f(c_{3})) + \D(f(c_{2}),f(c_{3}))$ is
also even, since it has the same parity. 
 Thus, we see that $\D(f(c_{1}),f(c_{2}))$ must
be even, which implies that $\D(f(c_{1}),f(c_{3}))$, $\D(f(c_{2}),f(c_{3}))$ are
also even, by symmetry of the indices. Since $\D(f(c_i), f(c_j)) \neq 0$
for some $i, j$ because $f(\d S)$ is not embedded, we must have $|\D(f(c_i),f(c_j))|=2$, which contradicts the previous claim and thus
implies that $0\neq f_{\ast}([\d S])\in H_1(\d M)$. 

Since  $|\D(f(c_i),f(c_j))| \leq 1$, and the total parity of the
3 intersections is even, we conclude that $\D(f(c_i),f(c_j))=0$
for some $i\neq j$, that is  $f(c_{i})$ and $f(c_j)$ must be parallel (or
anti-parallel, keeping track of orientations). 
Since not all three are parallel, we may assume that $f(c_{1}), f(c_{2})$
are parallel, and $f(c_{3})$ intersects both precisely once. 
Thus, we have $l(f(h_1))=l(f(h_2))=b$, $l(f(h_3))=a$ for
some $a, b$ such that $ab\leq 4$.  Moreover, $1\leq b\leq 2$
where the lower bound follows from the fact that the length of a horocycle
in a maximal cusp is $\geq 1$ \cite{A02},  and the upper bound
follows from the facts that $l(h_1)=l(h_2)=b$ and $l(h_1)l(h_2)\leq 4$. 
Clearly we also have $a\leq 4$.  

In $S$, there are embedded essential arcs (unique up to isotopy) $a_{ij}$ connecting $c_{i} $ to $c_{j}$.
We may assume that $\Int a_{ij}\subset \Int S$ is a geodesic. 
We may use these arcs to analyze
the preimage $\tilde{H} \subset \HH^3$ of the horoball neighborhood of the cusp
$H\subset \Int M$. Identifying $\HH^{3}$
with the upper half space model, we may conjugate $\G$ so
that there is a component  of $\tilde{H}$ which is  
a horoball $H_{\infty} \subset \HH^{3}$ centered at 
$\infty$, so that the boundary of $H_{\infty}$ is
a Euclidean plane at height 1 and the intrinsic hyperbolic
metric on $\d H_{\infty}$ is the same as the induced
Euclidean metric. Then $\tilde{H} = \cup_{p\in \G(\infty)} H_p$,
where $H_p$ is a horoball component of $\tilde{H}$ centered at $p\in \d \HH^3$. Up to the stabilizer of $H_{\infty}$,
there are two lifts of each geodesic arc $\Int a_{ij}$ to $\HH^{3}$ with
one endpoint at $\infty$, corresponding to the
two ends of $a_{ij}$. If $f(a_{13})\neq f(a_{23})$, then
we see four horoballs from infinity of height $ab/4$ up
to the stabilizer of $H_{\infty}$ corresponding to the
four distinct lifts of these two arcs,
where $a=l(h_{3}), b=l(h_{1})$. By possibly conjugating
$\G$, we may assume that the four horoballs are centered
at $0, a/2, w_1, w_2$, where $w_1, w_2\in \CC$. 

{\bf Claim:} {\it If $f(a_{13})\neq f(a_{23})$, then  $ab<4$.}

If $f(a_{13})\neq f(a_{23})$ and  $ab=4$, then
these four distinct horoballs (up to the stabilizer of $\infty$) have height 1. We show that this gives
a contradiction. First, we will consider the case that $a<4$ and $b<2$. We get three strings of tangent horoballs up to the stabilizer
of $\infty$ corresponding to each cusp of $S$. One string consists of height 1
horoballs $H_{ka/2}$ and
horoballs of height $a^2/16$ $H_{ka/2+a/4}$, $k\in \ZZ$. The other pair of strings has height
1 horoballs $H_{ku+w_i}$, $|u|=b, u\in \CC-\RR$, $k\in \ZZ$, $w_1, w_2 \in \CC$ and horoballs of height $b^2/4$ $H_{(k+1/2)u+w_i}$. Then the horoballs $H_{ku+w_i}$
must each be disjoint from the horoballs $H_{ka/2}$ since $f(a_{13})\neq f(a_{23})$, and
disjoint from the horoballs $H_{ka/2+a/4}$ since they have distinct heights (since $a<4$ by assumption). This implies that
$|ku+w_i - k' a/2| \geq 1$ and $|ku+w_i - (k'a/2+a/4)|\geq a/4$ for
all $k, k'\in \ZZ$. 
A geometric computation shows that the only points
satisfying this property must
be distance $\sqrt{1-4/a^2}$ from the real axis (see Figure \ref{ab=4})
when $a\geq 2\sqrt{2}$, and distance $\sqrt{1-(a/4)^2}$ when $a\leq 2\sqrt{2}$.
\begin{figure}[htb] 
	\begin{center}
	\psfrag{a}{$a/4$}
	\psfrag{1}{$1$}
	\psfrag{h}{$\sqrt{1-4/a^2}$}
	\epsfig{file=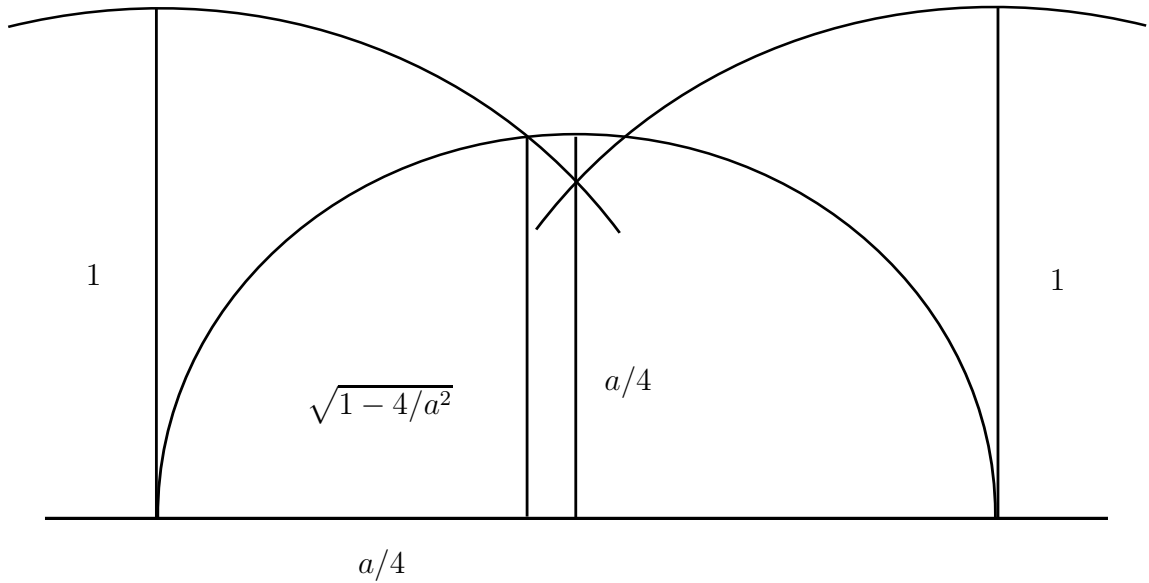, width=\textwidth, height=.5\textwidth}
	\caption{\label{ab=4} The imaginary part of $ku+w_i$ is $>\sqrt{1-4/a^2}$ or $<-\sqrt{1-4/a^2}$ when $a\geq 2\sqrt{2}$}
	\end{center}
\end{figure}
The horoballs $H_{(k-1)u+w_i}$ and $H_{ku+w_i}$ must straddle
the real axis (and therefore the 
strings of $H_{k'a/4}$-balls) for some $k\in \ZZ$, and therefore must
be separated by $2\sqrt{1-4/a^2}$ when $a\geq 2\sqrt{2}$. Since $b=4/a$,
we see that $b=4/a\geq 2 \sqrt{1-4/a^2}$, so  $a\leq 2\sqrt{2}$, thus $a=2\sqrt{2}$. Similarly, we see that
when $a\leq 2\sqrt{2}$, then $b\geq\sqrt{2}$. Exchanging the roles
of $a/2$ and $b$ in the above argument, we see that 
$H_{(k-1)a/2}$ and $H_{ka/2}$ must straddle $u\RR+w_i$ for
some $k$.  As before, we get 
$b\leq \sqrt{2}$ (where we are assuming $b<2$ to get $H_{ka/2}$
disjoint from $H_{(k'+1/2)u+w_i}$).
But this implies in either case that $a=2\sqrt{2}$, $b=\sqrt{2}$,
and $w=\sqrt{2}/2+\sqrt{2}/2i$. Therefore
the horoballs of height $\frac12$ in the two strings
must coincide. This implies
that the arcs $a_{12}$ and $a_{33}$ (the arc
connecting $c_3$ to itself)  in $S$ must
get identified by $f(S)$, which is impossible since
there would be an isometry fixing a lift of $f(a_{12}\cap a_{33})\in \HH^3$. In fact,
this configuration of horoballs does occur in the
Whitehead link complement, but not corresponding
to immersed pants in the manner hypothesized. 

We now turn to the case that $a=4$. Then we have
$b=1$, and this implies that $M$ is the figure eight knot
complement by \cite{A02}. Then $M$ satisfies the conclusion
of the theorem, but we need to show that the immersed
pants in $M$ are of the claimed type, and in fact we show
that this case does not occur. The previous argument
goes through if $f(a_{i3})\neq f(a_{33})$ for $i=1$ or $i=2$, since
this is equivalent to $ka/2+a/4\neq k'u+w_i$, for $k, k'\in \ZZ$. So
assume that $f(a_{13})=f(a_{33})$, and $f(a_{23})=f(a_{33})$,
then we contradict the assumption of the claim. 

Finally, we consider $b=2$, and therefore $a=2$. Again, the
argument above works if $ka/4\neq (k'+1/2)u+w_i$ for all
$k,k'\in \ZZ$, which is equivalent to $f(a_{i3})\neq f(a_{12})$, $i=1, 2$.
So we must have $f(a_{12})=f(a_{13})=f(a_{23})$, contradicting
the assumption of the claim.

{\bf Claim:} {\it $f(a_{13})=f(a_{23})$.}

For contradiction, assume that $f(a_{13})\neq f(a_{23})$. 
Thus, we may assume that $ab<4$. In this case, we
have two full-sized horoballs \cite{A}, and four horoballs
of height $ab/4$ up to the action of the stablizer in $\G$
of $\infty$. We may estimate the ``seen area''
of these horoballs, as in \cite{CM}. The two full-sized 
horoballs each have radius $\frac12$, so they contribute
an area of $\frac{\pi}{2}$. The other four balls may 
be ``overshadowed'' by the full-sized balls, but the
distance of the centers in $\d \HH^3$ must be $\geq \sqrt{ab/4}$
from the centers of the full-sized balls (of radius $\frac12$). Thus, disks
of radius $\sqrt{ab/4}-\frac12$ will be embedded
when centered at the centers of the horoballs. 
Thus we see an area of $\frac{\pi}{2}+ 4\pi ( \sqrt{ab}/2 -\frac12)^{2}$.
But we have $\area(\d \overline{H} )\leq ab$. So
we have 
$$\frac{\pi}{2}+ 4\pi ( \sqrt{ab}/2 -\frac12)^{2} \leq ab.$$
This is a quadratic inequality in $\sqrt{ab}$, and 
we complete the square to see that there are
no positive solutions $\sqrt{ab}$ satisfying the inequality.

This implies that we must have $f(a_{13})=f(a_{23})$. 
In this case, if $a_{i3}$ is oriented away from $c_{3}$, 
then the orientations of $f(a_{13})$ and $f(a_{23})$ must
be reversed under this identification (otherwise the
two arcs would be identified by a parabolic translation,
which is impossible because $f(c_3)$ is embedded). By Proposition \ref{Whiteheadimage}, we see that $M$ is obtained by 
Dehn filling on one component of the Whitehead link
complement. 
\end{proof}

\section{Conclusion}
The result in this paper answers a special case of
the general question as to how singular
can an immersed surface be? One may be able
to extend the results in this paper to understand immersed
twice-punctured two-sided projective planes in non-orientable hyperbolic
3-manifolds. These are totally geodesic for the same
reason that pants are, and it is likely that one could classify
all the non-embedded immersions. It is likely that one could
also give a classification of collections of pants in 
a hyperbolic 3-manifold. That is, to classify the patterns
of intersections that may arise. It's also likely possible
to classify $\pi_1$-injective immersions of punctured tori, and maybe
some other simple surfaces, into hyperbolic 3-manifolds.
It would be interesting to extend the classification of immersed pants to
arbitrary 3-manifolds, by analyzing how the surface
cuts through the various pieces of the geometric 
decomposition. For turnovers immersed in hyperbolic
3-orbifolds,  G. Martin showed that a $(2,3,p)$-triangle
group, $p\geq 7$, in a hyperbolic 3-orbifold is embedded \cite{Martin96}. 
However, there are other triangle groups which are immersed
in 3-orbifolds, see \cite{Maclachlan96, Rafalski07}.

For arbitrary surfaces, it's hard to imagine
a complete classification of immersions into 3-manifolds. We 
conjecture a structural
result  for immersed surfaces:
for a fixed topological type of surface, there are finitely many homeomorphism types of
3-manifolds and $\pi_1$-injective maps of the
surface into these manifolds, such that any $\pi_1$-injective 
immersion of the surface into a 3-manifold factors through an embedding of 
one of these manifolds. This conjecture seems feasible at least
when the target manifold is hyperbolic, and the main result
in this paper proves this conjecture for immersions of pants. 
Rafalski has shown the analogue of this conjecture for 
turnovers immersed in hyperbolic 3-orbifolds \cite{Rafalski07}.

\bibliographystyle{abbrv}
\bibliography{pants.bbl}

\end{document}